\pdfoutput=1 
\documentclass[10pt,a4paper]{article}

\usepackage[T1]{fontenc}
\usepackage[utf8]{inputenc}
\usepackage{amsmath,amscd}
\usepackage{amsfonts}
\usepackage{amssymb}
\usepackage{amsthm}
\usepackage{xcolor}
\usepackage{enumitem}
\usepackage{tikz-cd}
\usepackage{pdfpages}
\usepackage{hyperref}

\newtheorem{theorem}{Theorem}

\newtheorem{lemma}[theorem]{Lemma}
\newtheorem{proposition}[theorem]{Proposition}

\title{Quintic surfaces with 18 cusps}
\author{Lev Borisov and Carlos Rito}
\date{}

\begin{document}
\maketitle

\begin{abstract}
We construct quintic surfaces in the three-dimensional projective space
$\mathbb P^3$ with $18$ ordinary cusps.  Our starting point is the
Barth--Rams description of quintics containing a $3$-divisible set of $12$
cusps.  A specialization in which the two contact cubics are singular along
two skew lines produces a family with $16$ cusps, and examples with $18$
cusps can be found over small finite fields.  Our main construction
is based on quintics admitting two Barth--Rams decompositions.
The corresponding sets of $12$ cusps meet in $7$ points,
and we prove that the locus of quintics admitting two such decompositions
contains a $6$-dimensional component in the moduli space whose general member
has $17$ cusps. This makes it possible to find members with $18$ cusps
efficiently over finite fields.
We lift one of these surfaces to characteristic
zero using Newton--Hensel lifting and LLL reconstruction, obtaining a
quintic over a number field of degree $22$. We verify that this surface has
$18$ ordinary cusps and no other singularities.
\end{abstract}

\section{Introduction}
\label{sec:introduction}

The study of singular surfaces in $\mathbb P^3$ is classical.  For surfaces of
degree at most four with only nodes or cusps, that is, singularities of type
$\mathsf A_1$ or $\mathsf A_2$, the extremal cases are known.  In degree four,
the maximum number of nodes is $16$, attained by Kummer quartics, and the
maximum number of cusps is $8$ \cite{Barth_nine_cusps}.  The first degree in
which the corresponding question remains open is degree five.

Let $S\subset \mathbb P^3$ be a complex quintic surface.  The moduli space of
quintic surfaces has dimension $56-1-\dim \operatorname{PGL}(4)=40$.
A node imposes only one condition on this moduli space.  Nevertheless,
Beauville proved that a quintic surface has at most $31$ nodes, and this bound
is sharp \cite{Beauville_nodes_quintics}.

We are interested in quintics with cusp singularities. A cusp is expected to
impose two conditions, thus $20$ cusps impose $40$ conditions on the
moduli space of quintic surfaces. The known upper bound is also $20$
\cite{Tan_cusps}, but it is not known whether this bound is sharp. So far,
the largest known examples are isolated quintic surfaces with $16$ or $17$
cusps \cite{Rito_cuspidal}.

The extremal case of $20$ cusps is especially interesting because of its relation to
canonical maps of surfaces of general type.  Beauville proved that, if the
canonical image is a surface of general type, then the canonical map has
degree at most $9$ \cite{Beauville_canonical}.  But so far only examples
with canonical map of degree at most $5$ are known.  Tan observed that a
quintic with $20$ cusps and the required $3$-divisibility relations could lead
to a smooth $\left(\mathbb Z/3\right)^2$-cover $X$ whose canonical map has
degree $9$ onto the quintic \cite{Tan_cusps}.  Such a surface would satisfy
$p_g(X)=4,\ q(X)=0,\ K_X^2=45,$ hence $K_X^2=9\chi(\mathcal O_X)$.
It would therefore be a ball quotient.  Thus the existence of a quintic with
$20$ cusps is not only an extremal problem about singularities.

In this paper we construct a $6$-dimensional family whose
general member has $17$ cusps and exhibit members with $18$ cusps.

The construction is guided by the cusp code.  Tan's estimate implies that a
quintic with $18$ cusps has a non-zero codeword, whose weight is necessarily
$12$, $15$ or $18$.  The same estimate, together with the Griesmer bound,
shows that a quintic with at least $19$ cusps must contain a word of weight
$12$, see Proposition~\ref{prop:weight-twelve}.  We use the weight-$12$ case,
for which Barth and Rams give an explicit description.

In the general case of the Barth--Rams construction there is a residual plane,
which we take to be
\(
\Pi=\{w=0\}
\),
and the quintic is defined by
\[
F:=\frac{AB-C^3}{w},
\]
where
\[
A:=a^3+wq,\qquad
B:=b^3+wr,\qquad
C:=ab+w\ell.
\]
Here $a,b$ are linear forms on $\Pi$, $q,r$ are quadratic forms on
$\mathbb P^3$, and $\ell$ is a linear form on $\mathbb P^3$. For general
data, the surface ${F=0}$ has a $3$-divisible set of $12$ cusps
\cite{BarthRams_equations}.

One way of exploiting the Barth--Rams form is to require the two contact
cubics to be singular along two skew lines.  After turning two naturally
occurring nodes into cusps, we obtain a family whose general member has $16$
cusps.  Although the singular-line conditions make this family restrictive,
we were able to find quintics with $18$ cusps over $\mathbb F_p$ for small
primes $p$.

Our main construction avoids these singular-line conditions.  Instead, we
require a quintic to admit two Barth--Rams decompositions.  The two associated
sets of $12$ cusps have seven points in common, and their union therefore
consists of $17$ cusps.  After removing the redundancies in the
parametrization, this gives a $6$-dimensional family over $\mathbb Q$ whose
general member has $17$ cusps.  The appearance of one further cusp imposes
only two additional conditions.  Searching in the reduction of this family
over finite fields, we find examples with $18$ cusps.

To obtain an example with $18$ cusps in characteristic zero, we return to the
more special family in which the two contact cubics are singular along skew
lines, which is computationally better adapted to lifting.  Starting from an
$18$-cusp example over $\mathbb F_{17}$, we use Newton--Hensel iteration to
lift the relevant parameters to high $17$-adic precision.  The exact surface
is determined by four parameters. Using LLL we recover polynomial
relations over $\mathbb Q$ among them, and a Gr\"obner basis in lexicographic
order gives their exact algebraic values.  This reconstructs the quintic over
a number field of degree $22$.  We then confirm that the resulting 
surface has $18$ ordinary cusps and no other singularities.

The paper is organized as follows.  Section~\ref{sec:cusp-code} recalls the
cusp code and proves a restriction on the possible codewords of quintics with
many cusps.  Section~\ref{sec:BR} recalls the Barth--Rams decomposition
associated with a $3$-divisible set of $12$ cusps.  Section~\ref{sec:singular-lines}
studies the specialization in which the two contact cubics are singular along
skew lines.  Section~\ref{sec:two-BR} develops the construction using two
Barth--Rams decompositions and obtains the $6$-dimensional family of quintics
with $17$ cusps.  Finally, Section~\ref{sec:lifting} lifts an example with $18$
cusps to characteristic zero and verifies its singularities.

The Magma code for the computations in Sections \ref{sec:two-BR} and \ref{sec:lifting}
is provided in the ancillary files accompanying this paper.

\bigskip
\noindent\textbf{Acknowledgments.}
The second author was financed by Portuguese funds through FCT
(Funda\c c\~ao para a Ci\^encia e a Tecnologia).  This work was carried out
within project UID/00013/2025 of the Centro de Matem\'atica da Universidade
do Minho (CMAT/UM).

\section{The cusp code of a quintic surface}
\label{sec:cusp-code}

Let $S\subset\mathbb P^3$ be a normal quintic surface whose singularities $P_1,\ldots,P_n$ are
cusps, and let $\pi\colon \widetilde S\longrightarrow S$
be the smooth minimal resolution.  Denote by $E_i'$ and $E_i''$ the two exceptional
curves over $P_i$.  The cusp code is the ternary linear code
\[
 \mathcal C(S)=\ker\left(
        \mathbb F_3^n\longrightarrow H^2(\widetilde S,\mathbb F_3),
        \ (c_1,\ldots,c_n)\longmapsto
        \sum_{i=1}^n c_i[2E_i'+E_i'']
        \right).
\]
This is equivalent to the definition using $[E_i'-E_i'']$, since
\[
        [2E_i'+E_i'']=-[E_i'-E_i'']
        \qquad\text{in }H^2(\widetilde S,\mathbb F_3).
\]
The weight of a word is the number of its non-zero coordinates.  The
resolution $\widetilde S$ is simply connected, so the cohomological and
divisor codes considered by Tan coincide in this case
\cite[Section~3.2]{Tan_cusps}.  After interchanging $E_i'$ and
$E_i''$ where necessary, the support $I$ of a non-zero word satisfies
\[
        \sum_{i\in I}\bigl(2E_i'+E_i''\bigr)
        \in 3\operatorname{NS}(\widetilde S).
\]
We say that its support is a {\em $3$-divisible set of cusps}.  It determines
a cyclic triple cover of $S$ branched precisely over the cusps in its support.

Since $K_{\widetilde S}^2=5, \chi(\mathcal O_{\widetilde S})=5, q(\widetilde S)=0$,
Noether's formula gives
\[
        e(\widetilde S)
        =12\chi(\mathcal O_{\widetilde S})-K_{\widetilde S}^2=55,
        \qquad b_2(\widetilde S)=e(\widetilde S)-2=53.
\]
Writing $k=\dim_{\mathbb F_3}\mathcal C(S)$, Tan's estimate
\cite[Lemma~3.2.1]{Tan_cusps} yields
\begin{equation}
 \label{eq:code-dimension}
        k\geq
        \left\lceil\frac{3n-b_2(\widetilde S)}{2}\right\rceil
        =\left\lceil\frac{3n-53}{2}\right\rceil.
\end{equation}
In particular, a quintic with $18$ cusps has a non-zero cusp code.  By
\cite[Theorem~4.3.1]{Tan_cusps}, every non-zero word has weight $12, 15$ or $18$.

\begin{proposition}
 \label{prop:weight-twelve}
Let $S\subset\mathbb P^3$ be a normal quintic surface with only cusp
singularities.  If $S$ has at least $19$ cusps, then its cusp code contains a
word of weight $12$.  Equivalently, its singular set contain a $3$-divisible subset
of $12$ cusps.
\end{proposition}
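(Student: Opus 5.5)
The plan is to combine Tan's dimension estimate \eqref{eq:code-dimension}, the restriction on the possible weights from \cite[Theorem~4.3.1]{Tan_cusps}, and the Griesmer bound for ternary linear codes, arguing by contradiction. By \cite{Tan_cusps} a quintic has at most $20$ cusps, so only $n=19$ and $n=20$ need to be treated.

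First I compute the lower bound on $k=\dim_{\mathbb F_3}\mathcal C(S)$ from \eqref{eq:code-dimension}. For $n=19$ it gives $k\ge\lceil (57-53)/2\rceil=2$. For $n=20$ it gives $k\ge\lceil 7/2\rceil=4$. Next I invoke \cite[Theorem~4.3.1]{Tan_cusps}: every non-zero word of $\mathcal C(S)$ has weight $12$, $15$ or $18$. I must check that this theorem is stated for arbitrary cuspidal quintics and not only for those with $18$ cusps. I expect it is, since it comes from analysing the triple cover branched over the support of a word.

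Now suppose that $\mathcal C(S)$ has no word of weight $12$. Then $\mathcal C(S)$ is a ternary $[n,k,d]$ code with $d\ge 15$. Passing to a subcode of dimension exactly $2$ (for $n=19$) or $3$ (for $n=20$) can only increase the minimum distance, so this hypothesis persists. The Griesmer bound $n\ge\sum_{i=0}^{k-1}\lceil d/3^i\rceil$ then gives the following.
\begin{itemize}
\item For $n=19$ and $k=2$: $n\ge 15+5=20$, a contradiction.
\item For $n=20$ and $k=3$: $n\ge 15+5+2=22$, again a contradiction.
\end{itemize}
Coordinates that vanish on the whole code do no harm here, since the Griesmer bound holds for any linear code of length $n$. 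Hence a word of weight $12$ exists.

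The equivalence with a $3$-divisible subset of $12$ cusps is immediate from the discussion preceding the proposition. After swapping $E_i'$ and $E_i''$ where necessary, the support of a non-zero word is exactly a $3$-divisible set of cusps.

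The main obstacle is not the combinatorics but making sure the inputs apply as used. One point is that Tan's weight restriction holds for every normal cuspidal quintic. The other is that the code dimension bound \eqref{eq:code-dimension} uses $b_2=53$, which relies on $\widetilde S$ being simply connected with $q=0$. If the weight theorem were only available for specific $n$, I would instead rederive the weight restriction $|I|\in\{12,15,18\}$ from $\chi$ of the triple cover.
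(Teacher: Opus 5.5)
Your proposal is correct and follows the paper's proof: the same dimension bounds $k\ge2$ and $k\ge4$ from Tan's estimate, Tan's weight restriction forcing $d\ge15$ if no weight-$12$ word exists, and the Griesmer bound giving a contradiction. The only difference is that for $n=20$ you truncate to $k=3$ and get $22>20$, whereas the paper keeps all four terms and gets $23>20$; either contradiction suffices.
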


\begin{proof}
Let $n$ be the number of cusps.  Since $n\leq20$, then
$n=19$ or $n=20$.  From \eqref{eq:code-dimension},
\[
        \dim\mathcal C(S)\geq2\quad\text{if }n=19,
        \qquad
        \dim\mathcal C(S)\geq4\quad\text{if }n=20.
\]
Suppose that $\mathcal C(S)$ contains no word of weight $12$.  Since its only
possible non-zero weights are $12$, $15$ and $18$, its minimum weight $d$
then satisfies $d\geq 15$.

Recall that a ternary $[n,k,d]$ code is an $\mathbb F_3$-linear code
of length $n$ and dimension $k$, with minimum distance $d$.  Since the code is
linear, $d$ is the smallest weight of a non-zero word.  For such a code, the
Griesmer bound gives \cite[Corollary~2.1.4]{Tan_cusps}
\[
        n\geq\sum_{i=0}^{k-1}
        \left\lceil\frac{d}{3^i}\right\rceil.
\]
If $n=19$, then $k\geq2$, so
\[
        19\geq15+\left\lceil\frac{15}{3}\right\rceil=20,
\]
a contradiction.  If $n=20$, then $k\geq4$, so
\[
        20\geq
        15+\left\lceil\frac{15}{3}\right\rceil
        +\left\lceil\frac{15}{3^2}\right\rceil
        +\left\lceil\frac{15}{3^3}\right\rceil
        =23,
\]
again a contradiction.  Therefore $\mathcal C(S)$ contains a word of weight
$12$.
\end{proof}

\section{The Barth--Rams decomposition}
\label{sec:BR}

In this section we recall the Barth--Rams decomposition associated with a $3$-divisible set of $12$ cusps.

Let $S=\{F=0\}\subset\mathbb P^3$ be a normal quintic surface containing a
$3$-divisible set $\mathcal P=\{P_1,\ldots,P_{12}\}$
of cusps, and let $\pi\colon\widetilde S\to S$ be the smooth minimal resolution.  If
$E_i',E_i''$ are the exceptional curves over $P_i$, they may be labelled so
that
\[
 \Lambda_1:=\frac{1}{3}\sum_{i=1}^{12}(2E_i'+E_i''),
 \qquad
 \Lambda_2:=\frac{1}{3}\sum_{i=1}^{12}(E_i'+2E_i'')
\]
are integral divisor classes.  
Writing $H=\pi^*\mathcal O_S(1)$, Barth and Rams show that the classes
$H-\Lambda_1$ and $H-\Lambda_2$ are effective
\cite[Proposition~1.1]{BarthRams_equations}. As a consequence, there are two
contact cubics $\{A=0\}$ and $\{B=0\}$ and a quadric $\{C=0\}$ such that, for suitable
divisors $\Delta_1,\Delta_2$ on $S$,
\[
\operatorname{div}_S(A)=3\Delta_1,
\qquad
\operatorname{div}_S(B)=3\Delta_2,
\qquad
\operatorname{div}_S(C)=\Delta_1+\Delta_2
\]
\cite[Theorem~1.2]{BarthRams_equations}.

It follows that $AB$ and $C^3$ have the same divisor on $S$.  After rescaling
one of the forms, we may therefore assume that $AB=C^3$ on $S$, and hence
\begin{equation}
 \label{eq:BR-decomposition}
        AB-C^3=LF,
\end{equation}
where $L$ is a non-zero linear form
\cite[Lemma~2.1\,(b)]{BarthRams_equations}.  Thus the sextic
$\{AB-C^3=0\}$ is the union of $S$ and the residual plane
$\Pi=\{L=0\}$.
We call \eqref{eq:BR-decomposition} a Barth--Rams decomposition of $S$.

We use the dense open case of their description.  Choose coordinates $(x,y,z,w)$ so that
$L=w$.  On the residual plane $\Pi=\{w=0\}$ one has the restriction
$\overline A\,\overline B=\overline C^{\,3}.$
For a general member, $\overline C$ is the product of two distinct linear
forms $a,b$ on $\Pi$, and, after rescaling and possibly interchanging $A$ and
$B$,
\[
        \overline A=a^3,
        \qquad
        \overline B=b^3,
        \qquad
        \overline C=ab.
\]
The data can therefore be written as
\begin{equation}
 \label{eq:BR-normal-form}
        A=a^3+wq,
        \qquad
        B=b^3+wr,
        \qquad
        C=ab+w\ell,
\end{equation}
where $q,r$ are quadratic forms and $\ell$ is a linear form.  The corresponding
quintic equation is
\begin{equation}
 \label{eq:BR-quintic}
\begin{aligned}
        F&=\frac{AB-C^3}{w}.
\end{aligned}
\end{equation}

Barth and Rams also describe boundary cases. All computations below take place
in the open family \eqref{eq:BR-normal-form}.

The cusps arising from the decomposition are the points of the variety $V(A,B,C)$
outside $\Pi$.  At a transverse point $P$ of this intersection, the functions $A,B,C$ are local
coordinates and $w(P)\ne0$, so the local equation of $S$ is analytically
\[
        uv-t^3=0.
\]
Hence $P$ is an ordinary cusp.  The complete intersection $V(A,B,C)$ has
degree $18$. For general data, intersection multiplicity $6$ is concentrated
on the residual plane and the surface $S$ has $12$ ordinary $3$-divisible cusps
\cite[Theorem~2.1]{BarthRams_equations}.

Combining Proposition~\ref{prop:weight-twelve} with the Barth--Rams theorem,
every quintic with $19$ or $20$ cusps admits a decomposition of the form
\eqref{eq:BR-decomposition}.

\section{A specialization by singular contact cubics}
\label{sec:singular-lines}

Before introducing the main construction, we describe a specialization
of the Barth--Rams family.

The presentation \eqref{eq:BR-normal-form} depends
on $30$ coefficients: three for each of $a$ and $b$, ten for each of $q$ and
$r$, and four for $\ell$.  These are not moduli.  After quotienting by the
$12$-dimensional subgroup of $\operatorname{PGL}(4)$ preserving the residual
plane and by the two natural rescalings of the decomposition, the family has
dimension $16$.
Directly imposing six further moving cusps in this presentation leads to a
large system.  We therefore first look for a geometric specialization in which
some additional singularities occur automatically.

Keep the residual plane $\Pi=\{w=0\}$
and require the contact cubics $\{A=0\}$ and $\{B=0\}$ to be singular along the skew
lines
\[
        \Gamma_1=\{x=y=0\},
        \qquad
        \Gamma_2=\{x-z=y-w=0\},
\]
respectively.  The twelve Barth--Rams cusps remain.  In addition, the quadric
$\{C=0\}$ meets each line $\Gamma_i$ in two points.  One of them lies outside
$\Pi$.  Since the corresponding cubic is singular along $\Gamma_i$ and $\{C=0\}$
cuts the line transversely, this point is an ordinary cusp of the quintic.
Thus the two lines give two further cusps.

The other intersection points lie on the residual plane.  These two points are
\[
        Q_1=\Gamma_1\cap\Pi=(0:0:1:0),
        \qquad
        Q_2=\Gamma_2\cap\Pi=(1:0:1:0).
\]
Locally, the sextic $\{AB-C^3=0\}$ contains the component $\{w=0\}$, 
and the quintic equation is obtained by dividing by $w$.  
For a general member of the specialized family, the
resulting quadratic part at each $Q_i$ is non-degenerate.  Hence $Q_1$ and
$Q_2$ are nodes, and the general surface has singular set of type
$14\mathsf A_2+2\mathsf A_1.$

After the singular-line conditions are imposed, the construction depends on
$14$ parameters, some of which are redundant.  We impose the vanishing of the
Hessian determinant at $Q_1$ and $Q_2$, thus giving
a $12$-parameter (not moduli) family of quintic surfaces whose general member has $16$ cusps.

Since each additional cusp imposes two conditions, the locus with $18$ cusps
has expected codimension $4$ inside this family.  In fact, over small finite
fields we found quintics with $18$ cusps.  This specialization is nevertheless
restrictive: requiring both contact cubics to be singular along prescribed
lines cuts out a part of the Barth--Rams family.  Our aim is
to construct a larger family with many cusps.  We therefore replace the
singular-line conditions by the more flexible requirement that the same
quintic admits two Barth--Rams decompositions.

\section{Two Barth--Rams decompositions}
\label{sec:two-BR}

We now impose two Barth--Rams decompositions on the same quintic.  We first
use projective equivalence to take the two residual planes to be
$\Pi_w=\{w=0\}$ and $\Pi_z=\{z=0\}$.  Let $a,b$ be the two linear forms
on $\Pi_w$ associated with the first Barth--Rams decomposition.  We work on
the dense open subset where $w,z$, together with two linear extensions of
$a,b$ to $\mathbb P^3$, are linearly independent.  By projective equivalence,
we may then take these extensions to be $x$ and $y$.  Thus $a=x$ and $b=y$
on $\Pi_w$, and the first decomposition is
\[
\begin{aligned}
        A_w&=x^3+wq_1,\\
        B_w&=y^3+wq_2,\\
        C_w&=xy+w\ell,
\end{aligned}
\qquad
        F_w=\frac{A_wB_w-C_w^3}{w},
\]
where $q_1,q_2$ are quadratic forms and $\ell$ is a linear form.

For the second decomposition, write
\[
        \alpha=A_1x+A_2y+A_3w,
        \qquad
        \beta=B_1x+B_2y+B_3w,
\]
and let $Q_1,Q_2$ be quadratic forms and $L$ a linear form.  We set
\[
\begin{aligned}
        A_z&=\alpha^3+zQ_1,\\
        B_z&=\beta^3+zQ_2,\\
        C_z&=\alpha\beta+zL,
\end{aligned}
\qquad
        F_z=\frac{A_zB_z-C_z^3}{z}.
\]
The condition that the two decompositions define the same quintic is the
polynomial identity
\begin{equation}
 \label{eq:two-BR-identity}
        F_w=F_z.
\end{equation}
Comparing coefficients gives the equations used in the Magma computation.

The first normalized decomposition depends on $24$ coefficients and the
second on $30$, so before imposing \eqref{eq:two-BR-identity} we have $54$
parameters.  There is still a $5$-dimensional group of projective
transformations preserving the two residual planes and the normalization of
the first decomposition.  Its elements have the form
\begin{equation}
 \label{eq:residual-projectivities}
 (x,y,z,w)\longmapsto
 (\lambda x+uw,\,\mu y+vw,\,\nu z,\,\rho w),
\end{equation}
modulo a common scalar.  There is also the one-dimensional rescaling
\begin{equation}
 \label{eq:second-BR-rescaling}
 \begin{aligned}
        \alpha&\longmapsto t\alpha,
        &Q_1&\longmapsto t^3Q_1,\\
        \beta&\longmapsto t^{-1}\beta,
        &Q_2&\longmapsto t^{-3}Q_2,
        &L&\longmapsto L,
 \end{aligned}
\end{equation}
which leaves $F_z$ unchanged.

We use these six degrees of freedom to normalize the second decomposition.
Let $L_3$ denote the coefficient of $z$ in $L$.
On the dense open subset where
\[
 A_1A_2B_1L_3(A_1B_2-A_2B_1)\ne0,
\]
the translations in
\eqref{eq:residual-projectivities} first give
\(
        A_3=B_3=0
\).
The diagonal coordinate changes, together with
\eqref{eq:second-BR-rescaling}, can then be used to impose
\(
        A_1=A_2=B_1=1
\),
and the remaining scaling of $z$ gives $L_3=1$.  Thus we may write
\begin{equation}
 \label{eq:normalized-second-BR}
        \alpha=x+y,
        \qquad
        \beta=x+B_2y,
        \qquad
        L=L_1x+L_2y+z+L_4w,
\end{equation}
with $B_2\ne1$.  Writing
\[
 Q_1=\sum_{i=1}^{10}C_i m_i,
 \qquad
 Q_2=\sum_{i=1}^{10}D_i m_i,
\]
where
\[
 (m_1,\ldots,m_{10})=
 (x^2,xy,xz,xw,y^2,yz,yw,z^2,zw,w^2),
\]
the normalized system therefore depends on $48$ parameters: 
the $24$ coefficients of $q_1,q_2,\ell$, the $20$ coefficients
of $Q_1,Q_2$, the coefficient $B_2$, and $L_1,L_2,L_4$.  Comparing
coefficients in \eqref{eq:two-BR-identity} gives, in our implementation,
$56$ polynomial equations in these $48$ parameters.

Over finite fields, the family obtained above is very easy to use
computationally.  A direct computation of a general member was initially
rather slow, but a closer study of the defining equations showed that the
choice of variables to solve for is crucial.  With a suitable choice, the same
computation becomes very fast.  For random choices over finite fields, we
verified that the resulting member is a quintic surface with exactly $17$
cusps, and that the two sets of $12$ cusps determined by its Barth--Rams
decompositions intersect in $7$ points.

We fixed one such quintic $Q$ over the rational numbers.  
At the corresponding point $p$ of the normalized
coefficient scheme, the Jacobian matrix of the $56$ equations has rank
$42$.
Consequently the Zariski tangent space at $p$ has dimension
$48-42=6$.
Our goal is to show that the irreducible component through $p$ has dimension
exactly $6$.

\begin{lemma}
\label{lem:three-divisibility-deformation}
Let $X$ be a quintic surface with cusps, and let $\mathcal N$ be a
$3$-divisible set of cusps.  Then $\mathcal N$ remains $3$-divisible under
every sufficiently small equisingular deformation of $X$.
\end{lemma}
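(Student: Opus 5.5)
The plan is to use the topological characterization of the cusp code and the local triviality of equisingular deformations. Let $\{X_t\}_{t\in T}$ be an equisingular deformation of $X=X_0$ over a small contractible base $T$, for instance a disc or a small ball in the parameter space. Each cusp $P_i$ of $X$ then moves in a family $P_i(t)$ of cusps of $X_t$. Simultaneous resolution of the $\mathsf A_2$ singularities along the family gives a smooth family $\widetilde{\mathcal X}\to T$ with fibres the minimal resolutions $\widetilde X_t$. It also gives relative exceptional divisors $\mathcal E_i',\mathcal E_i''$ restricting to $E_i'(t),E_i''(t)$. Since the cusps are isolated $\mathsf A_2$ points moving in the family, the minimal resolutions can be performed in the family: blow up the section of cusps twice, in the standard way, or invoke Brieskorn--Tyurina simultaneous resolution without base change, which is legitimate because the singularity type is constant. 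I expect this to be the main point needing care, but only in the sense of citing the right result. The equisingular (even $\mu$-constant) assumption means no base change is needed.

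Next, by Ehresmann's theorem the smooth proper family $\widetilde{\mathcal X}\to T$ is $C^\infty$-trivial over the contractible $T$. This gives canonical isomorphisms $H^2(\widetilde X_0,\mathbb Z)\cong H^2(\widetilde X_t,\mathbb Z)$, hence also with $\mathbb F_3$ coefficients. Under these isomorphisms the classes $[E_i'(t)]$ and $[E_i''(t)]$ correspond to $[E_i']$ and $[E_i'']$, because they are restrictions of the global classes $[\mathcal E_i'],[\mathcal E_i'']\in H^2(\widetilde{\mathcal X},\mathbb Z)$, and $H^2(\widetilde{\mathcal X})\to H^2(\widetilde X_t)$ is an isomorphism for each $t$. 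The labelling $E'$ versus $E''$ is carried along continuously.

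It follows that the map $\mathbb F_3^n\to H^2(\widetilde X_t,\mathbb F_3)$ defining $\mathcal C(X_t)$ is identified with the one for $X_0$. Hence $\mathcal C(X_t)=\mathcal C(X_0)$ as subspaces of $\mathbb F_3^n$, using the labelling of cusps transported along the family. In particular, the codeword supported on $\mathcal N$ persists, so $\mathcal N(t)$ is $3$-divisible.

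To pass from cohomological to divisorial $3$-divisibility, use the remark in Section~\ref{sec:cusp-code}: $\widetilde X_t$ is simply connected (it is a resolution of a quintic with rational double points), so $H^2(\widetilde X_t,\mathbb Z)$ is torsion-free. Since $p_g$ is constant and the class $\sum_{i\in\mathcal N}(2E_i'+E_i'')$ is algebraic, a class in $H^2$ that is $3$ times an integral class is $3$ times the class of a divisor by the Lefschetz $(1,1)$ theorem. This is because $\frac13\sum(2E_i'+E_i'')$ is an integral class of type $(1,1)$. This gives the divisor class $\Lambda_1(t)$, completing the argument.
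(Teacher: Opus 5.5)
Your proposal is correct and follows essentially the paper's route. You use a simultaneous resolution of the $\mathsf A_2$ points along the equisingular family (the paper cites Wahl's theorem for this), then $C^\infty$-triviality of the resolved family to transport $[D]=3[M]$ with the exceptional classes matched, and finally Lefschetz $(1,1)$ on the regular surface $\widetilde X_t$ to recover $D_t\equiv 3M_t$; phrasing it as $\mathcal C(X_t)=\mathcal C(X_0)$ only repackages the same argument. One small slip: a single blow-up of the cusp section already resolves an $\mathsf A_2$ point (its exceptional locus is the conic $uv=0$), so ``twice'' should be ``once'', but your alternative appeal to simultaneous resolution makes this immaterial.
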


\begin{proof}
Let $\widetilde X\to X$ be the minimal resolution.  After choosing the
labelling of the two exceptional curves over each cusp, let $D$ be the
exceptional divisor corresponding to $\mathcal N$.  By $3$-divisibility there
is a divisor $M$ on $\widetilde X$ such that
\[
        D\equiv 3M,
\]
where $\equiv$ denotes linear equivalence.
Since an $A_2$-singularity is rational, an equisingular deformation admits a
simultaneous resolution, see \cite[Theorem~5.16]{Wahl76}.  Thus, after
shrinking the base, the exceptional curves deform with the surface and give
divisors $D_t$ on the nearby resolutions $\widetilde X_t$.

The simultaneous resolution is a smooth proper family, so over a sufficiently
small disk its fibres are differentiably identified.  We may therefore
identify
\[
        H^2(\widetilde X_t,\mathbb Z)
        \simeq H^2(\widetilde X,\mathbb Z),
\]
with the classes of the exceptional curves corresponding under this
identification.  Hence
\[
        [D_t]=3\alpha_t
\]
for an integral class $\alpha_t\in H^2(\widetilde X_t,\mathbb Z)$.  Since
$D_t$ is a divisor, $[D_t]$ is of type $(1,1)$, and therefore so is
$\alpha_t=[D_t]/3$.  By the Lefschetz $(1,1)$ theorem there is a line bundle
$M_t$ on $\widetilde X_t$ with $c_1(M_t)=\alpha_t$.  Finally,
$\widetilde X_t$ is regular, hence $\operatorname{Pic}^0(\widetilde X_t)=0$,
so equality of first Chern classes gives
\[
        D_t\equiv 3M_t.
\]
Thus the same set of moving cusps remains $3$-divisible.
\end{proof}

\begin{theorem}
\label{prop:two-BR-dimension}
Let $Q$ be the quintic with $17$ cusps fixed above.
The irreducible component of the two Barth--Rams locus containing $Q$ has 
dimension $6$ in the moduli space of quintic surfaces. In particular, this
construction gives a $6$-dimensional family of quintic surfaces with $17$
cusps.
\end{theorem}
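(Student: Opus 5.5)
Our strategy is to squeeze the dimension between two bounds. The Zariski tangent space computation fixed above gives an upper bound of $6$. For the lower bound we will use an equisingular deformation argument: the cusps impose at most $2$ conditions each, and Lemma~\ref{lem:three-divisibility-deformation} together with the Barth--Rams theorem guarantees that nearby equisingular deformations again admit two decompositions. Finally, we have to check that the normalized coefficient scheme maps quasi-finitely to moduli, so that dimensions are preserved.

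\emph{Upper bound.} Let $Z$ be the normalized coefficient scheme cut out by the $56$ equations in $48$ parameters, and let $Z_0$ be the irreducible component through $p$. Then
\[
\dim Z_0\le\dim T_pZ=48-42=6.
\]
We then show that the map $Z_0\to\mathcal M$ to the moduli space of quintics has finite fibres near $p$. The normalization \eqref{eq:normalized-second-BR} used all six degrees of freedom from \eqref{eq:residual-projectivities} and \eqref{eq:second-BR-rescaling}. A given quintic with finitely many cusps has only finitely many $3$-divisible $12$-subsets. Each such subset determines its Barth--Rams decomposition up to the finitely many choices of the residual data. The stabilizer of a general quintic in $\operatorname{PGL}(4)$ is finite. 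Hence the image of $Z_0$ in moduli has dimension at most $6$.

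\emph{Lower bound.} Let $S_{17}\subset\mathbb P(H^0(\mathcal O(5)))\cong\mathbb P^{55}$ be the equisingular locus of quintics with $17$ cusps near $Q$. Each cusp imposes at most two conditions, counting the moving point. Therefore every component of $S_{17}$ through $Q$ has dimension at least $55-2\cdot17=21$, and its image in moduli has dimension at least $21-15=6$.

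Now take a small equisingular deformation $Q_t$ of $Q$. By Lemma~\ref{lem:three-divisibility-deformation}, the two $3$-divisible sets of $12$ cusps of $Q$, which share $7$ points, stay $3$-divisible on $Q_t$. By \cite[Theorem~1.2]{BarthRams_equations} and \eqref{eq:BR-decomposition}, each set then gives a Barth--Rams decomposition of $Q_t$. These decompositions vary continuously with $t$, since they are determined by sections of $H-\Lambda_i$ for the deforming classes. Because $Q$ lies in the dense open subsets where the normal form \eqref{eq:BR-normal-form} and the normalization \eqref{eq:normalized-second-BR} apply (the inequality $A_1A_2B_1L_3(A_1B_2-A_2B_1)\neq0$ is open), $Q_t$ gives a point of $Z$ near $p$. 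So the germ of $S_{17}$ maps into the image of $Z$. Combining the two bounds, the image has dimension exactly $6$, and the general member has $17$ cusps because that is an open condition satisfied at $Q$.

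\emph{Main obstacle.} The hardest step is the continuity and uniqueness of the decompositions in the lower bound. One needs $h^0(H-\Lambda_i)$ to stay constant along the deformation, or some other way to make the contact cubics and quadric vary algebraically, and the residual plane and the forms $a,b$ must stay in the generic open case. I would first try semicontinuity: at $Q$, compute $h^0(\widetilde Q,H-\Lambda_i)=1$ directly. Uniqueness of the decomposition up to scaling then gives an étale-local section over the equisingular germ.

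A fallback that avoids this issue is to bound the rank of the Jacobian from above. One would exhibit $6$ independent explicit tangent directions that integrate, for example by solving the system for $42$ variables in terms of $6$ free parameters near $p$, as in the fast Magma solution. This shows $Z_0$ is smooth of dimension $6$ at $p$, and then only the finiteness of $Z_0\to\mathcal M$ remains to be checked.
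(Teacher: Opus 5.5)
Your proposal is correct and follows essentially the same route as the paper. The equisingular count gives $\dim\ge 6$, Lemma~\ref{lem:three-divisibility-deformation} together with Barth--Rams places the $17$-cusp equisingular germ inside the two-decomposition locus, and the rank-$42$ Jacobian at $p$ gives $\dim\le 48-42=6$. Your quasi-finiteness step is not needed for the upper bound, since an image never has larger dimension than its source. The continuity of the decompositions that you flag as the main obstacle is exactly what the paper assumes without further justification when it says that every nearby member of the two-decomposition locus can be normalized near $p$.
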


\begin{proof}
Let $\mathcal U$ be the locus of quintic surfaces admitting at least two
Barth--Rams decompositions, and let $\mathcal M_{17}$ denote the equisingular
locus of all quintics with $17$ cusps near $Q$.  The two decompositions of $Q$
give two $3$-divisible sets of $12$ cusps.  By
Lemma~\ref{lem:three-divisibility-deformation}, both sets remain
$3$-divisible under every sufficiently small deformation of $Q$ inside
$\mathcal M_{17}$.  By the Barth--Rams construction of
Section~\ref{sec:BR}, they therefore continue to give two Barth--Rams
decompositions.  Hence there is a neighbourhood $V$ of $Q$ in the
moduli space of quintic surfaces such that
\[
        \mathcal M_{17}\cap V\subset \mathcal U\cap V.
\]

The locus $\mathcal M_{17}$ has local dimension at least $6$.  Indeed, a
moving $A_2$-point imposes at most two conditions on the $40$-dimensional
moduli space of quintic surfaces: locally one imposes the four first-derivative
equations and one Hessian equation, while the moving point itself has three
parameters.  Thus $17$ cusps impose at most $34$ conditions. We have then
\[
        \dim_Q\mathcal U\ge \dim_Q\mathcal M_{17}\ge 40-34=6.
\]

On the other hand, after possibly shrinking $V$, every member of
$\mathcal U\cap V$ can be normalized as above.  Thus $\mathcal U\cap V$ is
contained in the image of a neighbourhood of $p$ in the $48$-variable
coefficient scheme.  The Jacobian of this scheme has rank $42$ at $p$, so
\[
        \dim_Q\mathcal U
        \le \dim_p Z
        \le \dim T_pZ
        =48-42=6,
\]
where $Z$ denotes the normalized coefficient scheme.
Therefore
\[
        \dim_Q\mathcal U=6.
\]
In particular, the component of $\mathcal U$ through $Q$ coincides locally
with a component of the $17$-cusp equisingular locus.
\end{proof}

Since one further cusp imposes only two conditions, it is fast to find
quintic surfaces with $18$ cusps over finite fields, by a random search.
Lifting one of these examples to characteristic
zero, however, turned out to be considerably more difficult.

For this purpose the more special family of
Section~\ref{sec:singular-lines} is computationally better adapted.  In the
next section we start from an $18$-cusp member of that family over
$\mathbb F_{23}$ and explain how it can be lifted and reconstructed exactly
over a field of characteristic zero.

\section{Lifting an $18$-cusp quintic to characteristic zero}
\label{sec:lifting}

Our goal is now to construct an explicit quintic surface with $18$ cusps over
a field of characteristic zero.  We start from an example over
$\mathbb F_{23}$ in the specialized family of
Section~\ref{sec:singular-lines}.  Such an example is easily found by a random
search over $\mathbb F_{23}$.

\subsection{Newton--Hensel lifting}

We first turn the finite-field example into a square system suitable for
$p$-adic Newton iteration.  Recall that in the specialization of
Section~\ref{sec:singular-lines} the two contact cubics are singular along two
prescribed skew lines $\Gamma_1$ and $\Gamma_2$.  The construction already
contains the twelve Barth--Rams cusps and two further cusps, one on each
singular line and outside the residual plane $\Pi$.  The other intersection
points of the singular lines with the quadric lie on the residual plane and
are
\[
        Q_1=\Gamma_1\cap\Pi,\qquad
        Q_2=\Gamma_2\cap\Pi.
\]
For a general member of the family these two points are nodes.  We impose the
vanishing of the Hessian determinant at $Q_1$ and $Q_2$ in order to turn them
into cusps, and we impose the existence of two extra cusps.  After fixing four
surface coefficients, the resulting system has twelve unknowns: six remaining
surface coefficients and the three affine coordinates of each of the two
extra cusps.  There are also
twelve equations: two Hessian equations for the distinguished nodes, and, for
each extra cusp, the four first-derivative equations together with the
vanishing of the determinant of the affine Hessian.

Let
\[
        \boldsymbol f=(f_1,\ldots,f_{12})
\]
be this system and let
\[
        \boldsymbol u_0\in\mathbb F_p^{12},
        \qquad p=23,
\]
be the solution corresponding to our finite-field surface.  In our example
the Jacobian matrix
\[
        J(\boldsymbol u_0)
        =
        \left(
        \frac{\partial f_i}{\partial u_j}(\boldsymbol u_0)
        \right)
\]
is invertible modulo $p$.

We lift the solution by Newton iteration, doubling the $p$-adic precision at
each step.  Suppose that $\boldsymbol u_n$ is known modulo
$p^{2^n}$.  Put $e=2^n$ and look for a correction of the form
\[
        \boldsymbol u_{n+1}
        =
        \boldsymbol u_n+p^e\boldsymbol\delta_n.
\]
Taylor expansion gives
\[
\boldsymbol f(\boldsymbol u_n+p^e\boldsymbol\delta_n)
\equiv
\boldsymbol f(\boldsymbol u_n)
+p^eJ(\boldsymbol u_n)\boldsymbol\delta_n
\pmod{p^{2e}}.
\]
Therefore it is enough to solve
\begin{equation}
 \label{eq:newton-lifting}
 J(\boldsymbol u_n)\boldsymbol\delta_n
 \equiv
 -\frac{\boldsymbol f(\boldsymbol u_n)}{p^e}
 \pmod {p^e}.
\end{equation}
Since the Jacobian is invertible modulo $p$, this linear system has a unique
solution, and the corrected vector satisfies
\[
        \boldsymbol f(\boldsymbol u_{n+1})
        \equiv0\pmod {p^{2e}}.
\]
Thus the successive precisions are
\[
        p,\ p^2,\ p^4,\ p^8,\ldots,\ p^{2^n}.
\]
If the desired final precision is $p^N$ with $N$ not a power of two, the same
procedure is used, replacing the last doubling step by the required final
exponent.

\subsection{Reduction to four parameters and LLL}

The full Newton system has twelve variables, but only six of them are surface
coefficients. The other six are the coordinates of the two extra cusps and
are auxiliary for the reconstruction of the surface.  Moreover, the two
Hessian equations which turn the two distinguished nodes into cusps determine
two of these six surface coefficients in terms of the others.  Consequently,
the exact surface is determined by only four $23$-adic numbers, which we
denote by $s_1,s_2,s_3,s_4$.

We lift these numbers to precision $23^{2000}$ and use LLL to
recognize polynomial relations over $\mathbb Q$ among them.  Here LLL stands
for the Lenstra--Lenstra--Lov\'asz lattice-basis reduction algorithm \cite{LLL82}.  It
replaces a given basis of an integer lattice by a reduced basis containing
comparatively short vectors.

Clearing denominators, it is enough to search for relations with integral
coefficients.  Let
\[
        m_1=1,m_2,\ldots,m_{210}
\]
be the monomials in variables $(x,y,z,w)$ of total degree at most $6$. Their
number is 210.
For the LLL search we use the precision
\(
        M_s=23^{1900}
\)
and choose integers $v_i$ representing
\[
        v_i\equiv m_i(s_1,s_2,s_3,s_4)\pmod {M_s}.
\]
We seek small integers $c_1,\ldots,c_{210}$ such that
\[
        \sum_{i=1}^{210}c_iv_i\equiv0\pmod {M_s}.
\]
They give the candidate polynomial relation
\(
        \sum_{i=1}^{210}c_im_i=0
\).

Since $v_1=1$, the lattice of all coefficient vectors satisfying this
congruence is the row lattice of the matrix
\begin{equation}
\label{eq:lll-lattice}
 B=
 \begin{pmatrix}
 M_s      &0&0&\cdots&0\\
 -v_2     &1&0&\cdots&0\\
 -v_3     &0&1&\cdots&0\\
 \vdots   &\vdots&\vdots&\ddots&\vdots\\
 -v_{210} &0&0&\cdots&1
 \end{pmatrix}.
\end{equation}
In the implementation, the entries $-v_i$ in the first column are replaced
by their residues modulo $M_s$, which gives the same row lattice.
Every vector $\boldsymbol c=(c_1,\ldots,c_{210})$ in the row lattice of $B$
satisfies
\[
        c_1+\sum_{i=2}^{210}c_iv_i\equiv0\pmod {M_s},
\]
and conversely every integral vector satisfying this congruence belongs to
the row lattice.  LLL reduction of~\eqref{eq:lll-lattice} therefore produces
short coefficient vectors and hence candidate relations.  Exact relations
with moderate coefficients remain short as the precision increases, whereas
accidental modular relations normally require much larger coefficients.

The candidates found using $M_s=23^{1900}$ are then checked independently at
the full available precision $23^{2000}$.  Thus the final one hundred
$23$-adic digits are withheld from the LLL computation and used only for
validation.  In the successful computation, the validated relations of total
degree at most $6$ give enough equations to isolate the required algebraic
solution.

Let $I\subset\mathbb Q[x,y,z,w]$
be the ideal generated by the validated relations obtained from the degree-$6$
search.  We then compute a Gr\"obner basis of $I$ with respect to lexicographic order.  The
resulting zero-dimensional triangular system gives the exact algebraic values
of the four parameters, which are defined over a number field of degree 22.
Substituting these values in the formulas of
Section~\ref{sec:singular-lines} reconstructs the contact cubics, the quadric,
and hence the quintic exactly over a number field.  Thus the finite field
surface has been lifted to an explicit surface in characteristic zero.

\subsection{Verification of the singularities}
\label{sec:verification}

\begin{theorem}
The quintic surface reconstructed above has exactly $18$ ordinary cusps and no
other singularities.
\end{theorem}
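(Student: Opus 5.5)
The plan is to verify the statement by an exact computation over the number field $K$ of degree $22$ defining the reconstructed quintic $F$, split into three steps: locate the singular scheme, show it consists of exactly $18$ reduced points, and check that each point is an ordinary cusp.

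\textbf{Step 1: the singular scheme.} Let $J=(F,\partial_xF,\partial_yF,\partial_zF,\partial_wF)\subset K[x,y,z,w]$. We compute a Gr\"obner basis of $J$, or of its saturation by the irrelevant ideal. From this we read off that $J$ defines a zero-dimensional scheme, so $S$ is normal. We then check that its degree, computed from the Hilbert polynomial, is $36$, that is, $2$ for each cusp.

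To avoid working in a degree-$22$ extension throughout, we first reduce modulo a suitable prime $\mathfrak p$ of $K$ with residue field $\mathbb F_q$, where the reduction is good. Semicontinuity then does much of the work. If the reduced singular scheme is zero-dimensional of length $36$, then the generic one has dimension at most $0$ and length at most $36$. On the other hand, the $12$ Barth--Rams cusps, the $2$ cusps on $\Gamma_1,\Gamma_2$, the cusps at $Q_1,Q_2$ and the $2$ extra cusps give $18$ points, each of Tjurina number at least $2$, so the length is at least $36$. Hence the length is exactly $36$.

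\textbf{Step 2: ordinariness at each point.} At each singular point $P$ we check the following two conditions:

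\begin{itemize}
\item the Hessian of the affine equation at $P$ has rank exactly $2$;
\item the local Tjurina number is $2$.
\end{itemize}

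For an isolated hypersurface singularity, corank $1$ means type $A_k$, and the Tjurina number of $A_k$ is $k$, so these conditions give $A_2$. Since the total length is $36$ and each of the $18$ points contributes at least $2$, every point contributes exactly $2$ and there are no other singular points. The corank-$1$ condition can also be checked modulo $\mathfrak p$.

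\textbf{Step 3: the points in positions not covered by the reduction argument.} This mainly concerns the Barth--Rams cusps and points on the residual plane, whose positions are algebraic over $K$. We do not factor them explicitly. Instead we carry out the rank condition on the $2\times2$ minors of the Hessian as an ideal computation: we check that the ideal $J+(\text{all }3\times3\text{ minors of the Hessian})$ is the unit ideal, and that $J$ is radical of degree $18$ after a change of presentation.

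The main obstacle is the Gr\"obner computation over a degree-$22$ number field. We would sidestep it first by the reduction-mod-$\mathfrak p$ argument above, choosing $\mathfrak p$ of degree one with $q$ not dividing any relevant denominator. This is legitimate because singular-scheme length and Hessian rank behave semicontinuously, in the right directions, under specialization.
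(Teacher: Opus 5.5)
Your overall skeleton is the paper's. You compute the singular scheme over $K$ (zero-dimensional, degree $36$), use a reduction modulo a prime to understand its structure, and finish with the count ``$18$ points, each of length at least $2$, total $36$''. The count itself is fine, and so is your remark that $\tau=2$ forces $\mathsf A_2$.

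The gap is that the count needs $18$ \emph{distinct} singular points with $\tau\ge2$ in characteristic zero, and you never verify this. You take it from the construction, but the construction does not supply it. The twelve Barth--Rams cusps are twelve distinct points only for general data. For the specific reconstructed surface, points of $V(A,B,C)\setminus\Pi$ (or the six other distinguished points) could a priori coincide. If they did, both your lower bound ``length $\ge 36$'' in Step~1 and the conclusion of Step~2 would collapse.

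Step~3, which is where you try to supply this, is wrong as stated in both of its checks.

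First, the Hessian test points the wrong way. At a singular point $P$, Euler's formula puts $P$ in the kernel of the $4\times4$ Hessian of $F$. Its rank therefore equals that of the affine Hessian, which is $2$ at a cusp. So every $3\times3$ minor vanishes at every cusp, and $J+(3\times3\text{ minors})$ has zero set equal to the whole singular locus. It has empty zero set only if every singular point is a node, so your test would fail on the actual surface. The correct conditions are:
\begin{itemize}
\item the $3\times3$ minors lie in $\sqrt J$ (no nodes);
\item if you also want corank $\le 1$, $J+(2\times2\text{ minors})$ has empty zero set.
\end{itemize}

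Second, $J$ is not radical. At a cusp $uv-t^3=0$ the Jacobian ideal is $(u,v,t^2)$, of length $2$, which is exactly why $\deg J=36$. What you want is $\deg\sqrt J=18$.

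With these corrections, computed over $K$, your route would in fact be self-contained. The three facts $\deg J=36$, $\deg\sqrt J=18$ and ``no nodes'' already force $18$ points of Tjurina number exactly $2$, hence $18$ ordinary cusps and nothing else, without appealing to the construction.

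The paper closes the gap differently and more cheaply. Modulo $1031$, the reduced singular locus consists of $18$ points. Intersecting it with the Barth--Rams complete intersection $V(a,b,c)$ gives $12$ \emph{reduced} points, so the Barth--Rams intersection is transverse there. Transversality gives at once that these points are distinct and that the local equation has the form $uv-t^3$. Reduced points lift to distinct transverse points in characteristic zero. The six remaining points come from the construction, and the length count $36=18\cdot2$ then finishes the proof exactly as you intended.
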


\begin{proof}
Let $K$ be the number field obtained in the reconstruction and let
\[
        S=\{F=0\}\subset\mathbb P^3_K
\]
be the resulting quintic.  We first compute, over $K$, its
singular subscheme $\Sigma$.
The computation gives
\[
        \dim\Sigma=0,\qquad \deg\Sigma=36.
\]

We next reduce the exact surface modulo $p=1031$, obtaining a surface over
the degree-$22$ extension $\mathbb F_{1031^{22}}$.
Its singular subscheme is zero-dimensional.
Its primary components have degrees
\[
        8,6,6,2,2,2,2,2,2,2,2,
\]
while the corresponding reduced components have degrees
\[
        4,3,3,1,1,1,1,1,1,1,1.
\]
In particular, the reduced singular locus consists of exactly $18$ singular points.

The last eight points in the above decomposition are directly visible from
the construction and are cusps.  We now verify the twelve cusps coming from
the Barth--Rams decomposition.  Recall that, in the decomposition
\[
        ab-c^3=zF,
\]
the equations $a=0$ and $b=0$ define the two contact cubic surfaces, while
$c=0$ is the quadric.  The Barth--Rams points are the points of the complete
intersection $V(a,b,c)$ outside the residual plane $z=0$.

What remains to be checked is that this intersection is transverse at those
points, since this is precisely the condition which gives ordinary cusps in
the Barth--Rams construction.  Intersecting the reduced primary components
of the singular locus with $V(a,b,c)$ gives degrees
\[
        4,3,3,1,1,0,0,0,0,0,0.
\]
Hence the intersection consists of $12$
reduced points.  Thus the quadric $c=0$ meets the curve $a=b=0$
transversally at these twelve points, and the local Barth--Rams equation is
of type $\mathsf A_2$.  Therefore the quintic contains $18$ ordinary cusps and
no other singularities.

Finally, an $\mathsf A_2$ singularity contributes
length $2$ to the singular subscheme, so these eighteen cusps already
contribute $36$.
Since the singular subscheme over $K$ has degree $36$,
the characteristic-zero surface contains the corresponding
eighteen ordinary cusps and no other singularities.

\end{proof}

\bibliographystyle{amsplain}
\bibliography{References}

\providecommand{\bysame}{\leavevmode\hbox to3em{\hrulefill}\thinspace}
\providecommand{\MR}{\relax\ifhmode\unskip\space\fi MR }
\providecommand{\MRhref}[2]{%
  \href{http://www.ams.org/mathscinet-getitem?mr=#1}{#2}
}
\providecommand{\href}[2]{#2}
\begin{thebibliography}{1}

\bibitem{Barth_nine_cusps}
Wolf~P. Barth, \emph{On the classification of {$K3$} surfaces with nine cusps},
  Complex Analysis and Algebraic Geometry (Thomas Peternell and Frank-Olaf
  Schreyer, eds.), Walter de Gruyter, Berlin, 2000, pp.~41--59.

\bibitem{BarthRams_equations}
Wolf~P. Barth and S{\l}awomir Rams, \emph{Equations of low-degree projective
  surfaces with three-divisible sets of cusps}, Math. Z. \textbf{249} (2005),
  no.~2, 283--295.

\bibitem{Beauville_canonical}
Arnaud Beauville, \emph{L'application canonique pour les surfaces de type
  g\'en\'eral}, Invent. Math. \textbf{55} (1979), no.~2, 121--140.

\bibitem{Beauville_nodes_quintics}
\bysame, \emph{Sur le nombre maximum de points doubles d'une surface dans
  {$\mathbb P^3$} ({$\mu(5)=31$})}, Algebraic Geometry, Angers, 1979, Sijthoff
  \& Noordhoff, Alphen aan den Rijn, 1980, pp.~207--215.

\bibitem{LLL82}
A.~K. Lenstra, H.~W.~jun. Lenstra, and L{\'a}szl{\'o} Lov{\'a}sz,
  \emph{Factoring polynomials with rational coefficients}, Math. Ann.
  \textbf{261} (1982), 515--534.

\bibitem{Rito_cuspidal}
Carlos Rito, \emph{Cuspidal quintics and surfaces with {$p_g=0$}, {$K^2=3$} and
  {$5$}-torsion}, LMS J. Comput. Math. \textbf{19} (2016), no.~1, 42--53.

\bibitem{Tan_cusps}
Sheng-Li Tan, \emph{Cusps on some algebraic surfaces and plane curves}, Complex
  Analysis, Complex Geometry and Related Topics---Namba 60, 2003, pp.~106--121.

\bibitem{Wahl76}
Jonathan~M. Wahl, \emph{Equisingular deformations of normal surface
  singularities. {I}}, Ann. Math. (2) \textbf{104} (1976), 325--356.

\end{thebibliography}

\vspace{1cm}

\noindent Lev Borisov \vspace{0.1cm} 
\\Department of Mathematics, Rutgers University
\\Piscataway, NJ 08854
\\ \verb|borisov@math.rutgers.edu| 

\vspace{1cm}

\noindent Carlos Rito
\vspace{0.1cm}
\\ Centro de Matem\'atica, Universidade do Minho -- Polo CMAT-UTAD
\vspace{0.1cm}
\\ Universidade de Tr\'as-os-Montes e Alto Douro, UTAD
\\ Quinta de Prados
\\ 5000-801 Vila Real, Portugal
\vspace{0.1cm}
\\ \url{https://www.utad.pt}, \texttt{crito@utad.pt}

\end{document}